\newcommand{\proofend}{\hfill \hbox{\vrule width 5pt height 5pt depth
0pt}}
\newcommand{\R}{\mathbb{R}}
\newcommand{\Z}{\mathbb{Z}}
\newcommand{\N}{\mathbb{N}}
\newcommand{\Q}{\mathbb{Q}}
\newcommand{\Oz}{\mathcal{O}}
\newcommand{\proj}{\mathbb{P}}
\newcommand{\A}{\mathbb{A}}
\newcommand{\F}{\mathbb{F}}
\newtheorem{thm}{Theorem}
\newtheorem{lemma}{Lemma}[section]
\newtheorem{corol}[thm]{Corollary}
\newtheorem{conj}[thm]{Conjecture}
\newtheorem{defin}{Definition}[subsection]
\begin{document}

\title[Silverman's conjecture for additive polynomials]{Silverman's conjecture for \\  additive polynomial mappings}
\author{Vesselin Dimitrov}
\address{Yale University Math. Dept. \\ 10 Hillhouse Avenue \\ CT~06520--8283 }
\email{vesselin.dimitrov@yale.edu}

\begin{abstract}
Let $F : \mathbb{G}_{a/K}^d \to \mathbb{G}_{a/K}^d$ be an additive polynomial mapping over a global function field $K/\F_q$, and let $P \in \mathbb{G}_a^d(K)$. Following Silverman, consider $\delta := \lim_{n \in \N} (\deg{F^{n}})^{1/n}$ the dynamic degree of $F$ and $\alpha(P) := \limsup_{n \in \N} h_K(F^{n}P)^{1/n}$ the arithmetic degree of $F$ at $P$. We have $\alpha(P) \leq \delta$, and extending a conjecture of Silverman from the number field case, it is expected that equality holds if the orbit of $P$ is Zariski-dense.

We prove a weaker form of this conjecture: if $\delta > 1$ and the orbit of $P$ is Zariski-dense, then also $\alpha(P) > 1$. We obtain furthermore a more precise result concerning the growth along the orbit of $P$ of the heights of the individual coordinates,  and formulate a few related open problems motivated by our results, including a generalization ``with moving targets''  of Faltings's theorem back in the number field case.
 \end{abstract}

\maketitle

\section{Introduction} \label{intro}

Silverman made the following conjecture in~\cite{silverman}, which for our purposes here we state only for the special but wide open case of polynomial mappings.

\begin{conj}[Silverman]  \label{silverconj}
Let $F : \A^d \to \A^d$ be a polynomial self-map over $\bar{\Q}$ and let $P \in \A^d(\bar{\Q})$ be an algebraic point with Zariski-dense forward orbit $(F^nP)_{n \in \N}$. Then,
\begin{equation} \label{equal}
\limsup_{n \in \N} h(F^nP)^{1/n} = \lim_{n \in \N} \, (\deg{F^{\circ n}})^{1/n}.
\end{equation}
\end{conj}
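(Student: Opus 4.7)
The natural attack is via a \emph{dynamical canonical height}. First compactify $\A^d \hookrightarrow \proj^d$ and extend $F$ to a dominant rational map $\bar F : \proj^d \dashrightarrow \proj^d$; on a suitable birational model $\pi : X \to \proj^d$, obtained by resolving the indeterminacy of all iterates (in the spirit of Diller--Favre and its higher-dimensional analogues), the pullback $F^*$ acts linearly on $\mathrm{NS}(X)_\R$. Applying a Perron--Frobenius argument to this action on the nef cone --- possibly after passing to a projective limit over birational models so that pullback and composition commute --- the dynamical degree $\delta$ appears as the spectral radius, and one extracts a nonzero nef $\R$-class $\theta$ with $F^*\theta = \delta\,\theta$.

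Fix a Weil height $h_\theta$ associated to $\theta$. In the polarized case ($\theta$ ample) this is the standard Call--Silverman setup; in general $\theta$ is only nef. The aim is then to define a canonical height by the Tate telescoping limit
\[
\hat h_\theta(P) \;:=\; \lim_{n \to \infty} \delta^{-n}\, h_\theta(F^n P),
\]
verify that the limit exists and satisfies the functional equation $\hat h_\theta(F(P)) = \delta\,\hat h_\theta(P)$, and then compare with the standard affine height on $\A^d$. A comparison inequality $h \leq C\, h_\theta + O(1)$ on some Zariski-open locus --- a form of dynamical bigness for $\theta$ --- yields the pointwise bound
\[
h(F^n P) \;\geq\; C^{-1}\,\delta^n\,\hat h_\theta(P) - O(1),
\]
and hence $\alpha(P) \geq \delta$ as soon as $\hat h_\theta(P) > 0$. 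Since $\alpha(P) \leq \delta$ is the elementary degree-vs-height estimate, the conjecture reduces to the positivity $\hat h_\theta(P) > 0$ whenever the orbit of $P$ is Zariski dense.

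The last reduction is the main obstacle, and is essentially where the content of the conjecture sits. The goal is a Zhang-type dichotomy: the locus $\{P \in \A^d(\bar{\Q}) : \hat h_\theta(P) = 0\}$ should be contained in a proper $F$-invariant Zariski closed subvariety. For polarized endomorphisms this follows from arithmetic equidistribution (Yuan, Yuan--Zhang) combined with the classification of small-height subvarieties. In the non-polarized setting, however, $\theta$ is only nef (and may fail to be big), the Tate limit may diverge and $\hat h_\theta$ may have to be written as a difference of two semi-canonical heights obtained from a Hodge-index type decomposition of $\mathrm{NS}(X)_\R$ into the $\delta$-eigenspace and its spectral complement; one then loses the direct Northcott route to positivity. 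In fact containment of the small-height locus in a proper invariant closed subvariety is, in the non-polarized case, roughly equivalent to the Kawaguchi--Silverman conjecture, itself a reformulation of Silverman's conjecture.

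\textbf{Main obstacle.} Breaking this circularity is where a genuinely new input is required. I would look for an adelic, non-archimedean positivity statement for $\theta$ on Berkovich analytifications of a regular projective model of $\A^d$, combined with an equidistribution theorem strong enough to operate with merely nef rather than ample classes and with the possibly transcendental parameter $\delta$. Absent such a tool, the plan above can at best dispose of cases where the $F^*$-action on $\mathrm{NS}(X)_\R$ admits an ample eigenvector for $\delta$, or where $F$ is close enough to a polarized situation (e.g., regular polynomial automorphisms, skew-products over polarized bases, or monomial maps) that the existing arithmetic equidistribution machinery applies.
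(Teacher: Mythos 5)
This statement is labeled a \emph{conjecture} in the paper, and the paper does not prove it --- indeed it is an open problem over $\bar{\Q}$. What the paper actually proves (Theorem~\ref{main}) is a much weaker, qualitative statement in a different setting: for additive polynomial mappings over a global function field of positive characteristic, it shows that $\delta>1$ and a Zariski-dense orbit together force $\alpha(P)>1$, using a diophantine approximation argument (an auxiliary polynomial constructed via Siegel's lemma, an extrapolation step exploiting additivity and a place of good reduction, and Yu's zero estimate for $t$-modules). So there is no ``paper's own proof'' of Conjecture~\ref{silverconj} to compare against.

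Your write-up, accordingly, is not a proof and does not claim to be one. It is a reasonable survey of the Kawaguchi--Silverman canonical-height strategy: extract a nef eigenclass $\theta$ with $F^*\theta=\delta\theta$ on a suitable model, define $\hat h_\theta$ by a Tate limit, and reduce~(\ref{equal}) to the dichotomy that $\hat h_\theta(P)=0$ forces the orbit into a proper invariant subvariety. You correctly identify that in the non-polarized case $\theta$ need not be big, the Tate limit may not converge without a spectral decomposition of $\mathrm{NS}(X)_\R$, Northcott fails for a merely nef height, and the desired positivity dichotomy is essentially equivalent to the conjecture itself --- i.e.\ the plan is circular exactly where the content lies. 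That assessment is accurate. The one thing worth flagging is that the paper's actual contribution is orthogonal to your plan: it sidesteps canonical heights entirely and uses transcendence-theoretic machinery (zero estimates) available in the additive/positive-characteristic world, which has no direct analogue over $\bar{\Q}$; your plan, conversely, has no leverage in the function-field additive setting because the relevant $\theta$ is typically non-big there too. Neither route currently closes the gap, and you should present this explicitly as a reduction of the problem, not as a proof.
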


Here, $h: \A^d(\bar{\Q}) \to \R^{\geq 0}$ is the affine absolute logarithmic Weil height, and it is a general fact (cf.~Bellon-Viallet~\cite{algentropy}) that the limit on the right actually exists.
 When the mapping $F$ is generic in the sense that it avoids the zero locus in $\mathrm{Mor}(\A^d,\A^d)$ of the Macaulay multiresultant (a certain polynomial in the coefficients of $F$), then it extends as an endomorphism of projective space $\proj^d$. In that case, Conjecture~\ref{silverconj} is a simple consequence of the basic theory of heights; moreover, in that case, it suffices simply that the orbit of $P$ be infinite. The interest in the conjecture concerns the special mappings $F$: those belonging to the vanishing of the Macaulay multiresultant locus.

Silverman proved the upper bound in~(\ref{equal}), and hence this is a conjecture about a lower bound on the height along a Zariski-dense orbit for a polynomial iteration.
 In our preprint~\cite{vesselin} we used a simple arithmetic extrapolation procedure to obtain a very weak but non-trivial and completely general lower bound on orbit growth. The present paper rests on a similar principle, closer to traditional diophantine approximation proofs, to obtain a partial result on the positive characteristic counterpart of Conjecture~\ref{silverconj}.

While Silverman only considers the conjecture over a number field, it makes sense over an arbitrary  field with a notion of height. Over a global function field such as $\F_q(T)$ there is the rather interesting class of additive polynomial mappings, and for those the conjecture turns out to be more tractable.

\subsection{Notation} \label{notat}  let $C/\F_q$ be a regular, projective, and geometrically connected curve over $\F_q$. Let $K = K(C)$ be the function field of $C$. For $a \in K$ and a closed point $v \in C$, let $\mathrm{ord}_v(a)$ be the valuation of $a$ in $v$, and write $\mathrm{ord}_v^-(a) := \min(\mathrm{ord}_v(a),0)$. Then the affine logarithmic Weil height over $K$ on the vector group $\mathbb{G}_a^d$ is defined as
$$
h_K : \mathbb{G}_a^d(K) \to \R^{\geq 0}, \quad h_K(a_1,\ldots,a_d) := -\sum_{v \in C} \min_j \mathrm{ord}_v^-(a_j) \cdot \log{|k(v)|}.
$$
Applying the definition with $d = 1$, we have in particular a height of an element of $K$.
Further, for $G \in K[X_1,\ldots,X_d]$ a polynomial with coefficients in $K$, we define its height $h_K(G)$ to be the height of its set of coefficients, viewed as a $K$-valued point in the appropriate vector group $\mathbb{G}_a^N$.

Let $\tau : \mathbb{G}_a^d \to \mathbb{G}_a^d, \quad (x_1,\ldots,x_d) \mapsto (x_1^p, \ldots, x_d^p)$ be the $p$-power map, where $p$ is the characteristic of $\F_q$ (the radical of $q$). Recall that a polynomial mapping $F \in \mathrm{Mor}(\mathbb{G}_{a/K}^d,\mathbb{G}_{a/K}^d)$ meets the additivity constraint $F(x+y) = F(x) + F(y)$ if and only if it has the form
\begin{equation} \label{additive}
F = A_0 + A_1 \tau + \cdots + A_r \tau^r, \quad A_i \in M_d(K).
\end{equation}

\subsection{$t$-modules} \label{t-mod} Extending the standard terminology (Anderson~\cite{anderson}), by a \emph{$t$-module} of dimension $d$ over $K$ we simply mean an arbitrary ring homomorphism $\phi : \F_p[t] \to \mathrm{End}_{\F_p}(\mathbb{G}_{a/K}^d)$. (Usually one imposes the additional condition that all eigenvalues of $D\phi$ are equal, for reasons not needed in this paper.) Via $\phi(t) := F$, extended by linearity and composition ($\phi(t^n) = F^{\circ n}$), this is equivalent to the datum of an additive polynomial mapping $F : \mathbb{G}_{a/K}^d \to \mathbb{G}_{a/K}^d$ as above. A \emph{sub $t$-module} is then a connected algebraic subgroup of $\mathbb{G}_{a/K}^d$ invariant under $\phi$, or equivalently, under $F$.

\subsection{The dynamic and arithmetic degrees} \label{notass} Those are defined respectively by $\delta := \lim_{n \in \N} (\deg{F^n})^{1/n}$, and for $P \in \mathbb{G}_a^d(K)$ a $K$-point, by $\alpha(P) := \limsup_{n \in \N} h_K(F^nP)^{1/n}$. Further, for $\lambda \in K(\mathbb{G}_{a/K}^d)$ a non-constant rational function defined on the orbit $(F^nP)_{n \in \N}$, we let the corresponding restricted degrees
$$
\delta_{\lambda} := \limsup_{n \in \N} (\deg \lambda \circ F^n)^{1/n} \leq \delta
 $$
 and
 $$
 \alpha_{\lambda}(P) := \limsup_{n \in \N} h_K(\lambda(F^nP))^{1/n} \leq \alpha(P).
 $$
  We have $\alpha(P) \leq \delta$ by the argument in Prop.~12 in Silverman~\cite{silverman}. More generally, we will see (Corollary~\ref{restupper}) that $\alpha_{\lambda}(P)
\leq \delta_{\lambda}$.

Silverman's Conjecture~1 in~\cite{silverman} predicts that the arithmetic degrees $\{ \alpha(P) \in P \in \mathbb{G}_a^d(K)\}$ make up a finite set of algebraic integers, with $\alpha(P) = \delta$ for all $P$ having Zariski-dense orbit. In our situation, we can be more precise:

\begin{conj} \label{addconj}
Let as above $F$ and $\lambda$ be, respectively, a non-zero additive mapping and a non-constant rational function on $\mathbb{G}_{a/K}^d$. Then:
\begin{itemize}
\item[(i)]  The dynamic degrees $\delta$ and $\delta_{\lambda}$ are rational powers of $p$.
\item[(ii)] If $P \in \mathbb{G}_a^d(K)$ has $\lambda$ defined along $(F^nP)_{n \in \N}$ and $h_K(\lambda(F^nP)) = o\big(\deg{(\lambda \circ F^{\circ n})}\big)$, then $P$ belongs to a translate by a preperiodic point of a proper connected algebraic subgroup of $\mathbb{G}_{a/K}^d$ stable under $F$.
\item[(iii)]  In particular, if $P$ does not belong to such a translate, then $\alpha(P) = \delta$ and $\alpha_{\lambda}(P) = \delta_{\lambda}$.
\end{itemize}
\end{conj}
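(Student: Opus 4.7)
I would work inside the twisted polynomial ring $R := M_d(K)\{\tau\}$, where $\tau a = a^{(p)}\tau$, in which $F = \sum_{i=0}^r A_i\tau^i$ with $A_r \neq 0$. Then $\deg F^{\circ n} = p^{d_n}$ where $d_n$ is the $\tau$-degree of $F^{\circ n}$ in $R$. The sequence $(d_n)$ is subadditive, so by Fekete $d_n/n$ converges. For rationality, I would analyse the leading $\tau$-coefficient of $F^{\circ n}$, which is a Frobenius-twisted product $A_r \cdot A_r^{(p^r)} \cdots A_r^{(p^{(n-1)r})} \in M_d(K)$, and filter $\mathbb{G}_{a/K}^d$ by the eventually-stable images and kernels of such products. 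On each stable sub-$t$-module quotient, $F$ acts with a well-defined effective $\tau$-rank that contributes rationally to $\lim d_n/n$, yielding $\delta \in p^{\mathbb{Q}}$; the analogous filtration cut out by the vanishing of $\lambda \circ F^{\circ n}$ handles $\delta_\lambda$.

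\textbf{Plan for part (ii) --- the structure theorem.} Set $e_n := \deg(\lambda \circ F^{\circ n})$ and suppose $h_K(\lambda(F^nP)) = o(e_n)$. The heart of the plan is a Siegel-style interpolation. Inside the $K$-vector space $V_N$ of additive polynomials $\mathbb{G}_{a/K}^d \to \mathbb{G}_a$ of $\tau$-degree at most $N$, which has dimension $d(N+1)$, the function-field Siegel lemma produces a non-zero $\Psi \in V_N$ vanishing at $F^iP$ for $i = 0,\ldots,M-1$ (with $M$ slightly below $d(N+1)$) and of height controlled by the extremely small numbers $h_K(F^iP)$, $i < M$. For any $m \geq M$, the additivity of $\Psi$ gives $h_K(\Psi(F^mP)) \leq h_K(\Psi) + (\text{growth})\cdot \max_{i \leq m} h_K(F^iP)$, whose right side, under the $o(e_n)$ hypothesis and a careful choice of $N \ll m$, is strictly negative and hence forces $\Psi(F^mP) = 0$. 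The orbit of $P$ therefore eventually enters the proper $F$-stable algebraic subgroup $\ker\Psi \subset \mathbb{G}_{a/K}^d$, and a preperiodicity argument---performed via the right-Euclidean algorithm in $R$ to isolate the non-preperiodic summand of $P$---places $P$ itself in the claimed translate.

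\textbf{From (ii) to (iii), and the main obstacle.} Part (iii) is then immediate: if $P$ does not lie in such a translate then (ii) rules out $h_K(\lambda(F^nP)) = o(e_n)$, so $\alpha_\lambda(P) \geq \delta_\lambda$, and combined with the upper bound of Corollary~\ref{restupper} this gives equality; the same argument applied to the coordinate functions gives $\alpha(P) = \delta$. The main obstacle is the Siegel step in (ii): one must upgrade asymptotic smallness of heights into \emph{exact} vanishing of $\Psi$ on the orbit, and in positive characteristic this is delicate because $\tau$ preserves heights while multiplying $\tau$-degrees by $p$, so the height-versus-degree margin one is arbitraging against is thin. Overcoming it likely requires a sharp function-field gap principle of the Bombieri--Zannier / Evertse type for additive sections, and this is precisely the point where the authors' weaker conclusion $\alpha(P) > 1$ (in place of $\alpha(P) = \delta$) appears to be the best the present Siegel-type method can give.
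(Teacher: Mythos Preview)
The statement you are attempting is Conjecture~\ref{addconj}, which the paper explicitly leaves open; there is no proof in the paper to compare against. The paper establishes only the weaker Theorem~\ref{main} (namely $\delta_\lambda > 1 \Rightarrow \alpha_\lambda(P) > 1$), and you correctly identify at the end of your proposal that the Siegel-type method as presently understood does not reach the full equality $\alpha_\lambda(P) = \delta_\lambda$.

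That said, your plan for (ii) contains a concrete error worth flagging. You write that the auxiliary $\Psi \in V_N$ will have ``height controlled by the extremely small numbers $h_K(F^iP)$,'' and later invoke $\max_{i \leq m} h_K(F^iP)$ in the extrapolation bound. But the hypothesis of (ii) asserts smallness only of $h_K(\lambda(F^iP))$, not of $h_K(F^iP)$; the latter may well grow like $\delta^i$ even under the hypothesis. Any auxiliary construction must therefore be built from $\lambda$ and its pullbacks $\Pi_n = (F^n)^*\lambda$, not from arbitrary additive functions on $\mathbb{G}_a^d$. This is exactly what the paper does for the weaker Theorem~\ref{main}: the auxiliary polynomial $G_N$ in~(\ref{auxifun}) is a (non-additive) combination of monomials times powers of $\Pi_N$, engineered to vanish to high order at the \emph{origin} rather than at orbit points; extrapolation to $b(F)P$ proceeds via a $v_0$-adic Liouville inequality after the reduction~(\ref{redux}), and the conclusion is extracted from Yu's zero estimate (Lemma~\ref{yulem}) rather than from a bare kernel-of-$\Psi$ argument. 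Your additive-$\Psi$-vanishing-on-orbit scheme is structurally different and, as written, does not engage the actual hypothesis. (A secondary issue: the ``right-Euclidean algorithm in $R = M_d(K)\{\tau\}$'' you invoke is unavailable for $d > 1$, since $M_d(K)$ has zero divisors and $R$ is not even a domain.)
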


Here, a point is said to be preperiodic if it has finite orbit under $F$. In the language of (extended) $t$-modules introduced above, the special subvarieties in (ii, iii) are exactly the torsion translates of proper sub $t$-modules of the $t$-module $\phi$ defined by $F$. In the case that $\phi$ is a pure $\F_p[t]$-module of weight $w \in \Q^{> 0}$ in the sense of Anderson~\cite{anderson}, which is the object in function field arithmetic most closely analogous to an abelian variety, we expect more precisely that $\delta = p^{1/w}$ in (i).

\subsection{The main result} While we are unable to establish the equality $\alpha(P) = \delta$ in Conjecture~\ref{addconj} (iii), we at least show that $\delta > 1$ yields also $\alpha(P) > 1$. More generally, for $\lambda : \mathbb{G}_{a/K}^d \to \mathbb{G}_{a/K}$ any non-zero additive regular function, we establish the corresponding assertion for the restricted degrees $\delta_{\lambda}$ and $\alpha_{\lambda}(P)$.

\begin{thm} \label{main}
Let $F \in \mathrm{Hom}_{\F_p}(\mathbb{G}_{a/K}^d,\mathbb{G}_{a/K}^d)$ be an additive polynomial mapping and $\lambda \in \mathrm{Hom}_{\F_p}(\mathbb{G}_{a/K}^d,\mathbb{G}_{a/K})$ an additive polynomial function, both assumed non-zero. Consider a $K$-point $P \in \mathbb{G}_{a}^d(K)$ not belonging to a translate by a preperiodic point of a proper connected algebraic subgroup of $\mathbb{G}_{a/K}^d$ stable under $F$. If $\delta_{\lambda} > 1$, then also $\alpha_{\lambda}(P) > 1$.
\end{thm}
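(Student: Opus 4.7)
The plan is to argue by contradiction. Suppose that $P$ is not in a torsion translate of a proper $F$-stable connected subgroup, yet $\alpha_\lambda(P) \leq 1$. Identify $\mathrm{Hom}_{\F_p}(\mathbb{G}_{a/K}^d, \mathbb{G}_{a/K})$ with the free right module $K\{\tau\}^d$ over the twisted polynomial ring, so that the iterates $\mu_n := \lambda \circ F^n$ correspond to row-vector products $L \cdot M^n$, where $F = M \in M_d(K\{\tau\})$ and $\lambda = L \in K\{\tau\}^d$. The hypothesis $\delta_\lambda > 1$ says $\deg_\tau(L \cdot M^n)$ grows exponentially in $n$, while $\alpha_\lambda(P) \leq 1$ says $h_K((L \cdot M^n)(P)) = o(\sigma^n)$ for every $\sigma > 1$. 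The strategy is to build an auxiliary non-zero additive polynomial vanishing on the entire forward orbit of $P$, extract from it a proper $F$-stable subgroup, and contradict the geometric hypothesis.

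By Riemann--Roch on $C$, the $\F_q$-vector space of row vectors $\nu \in K\{\tau\}^d$ with $\deg_\tau \nu \leq N$ and coefficient height $\leq H$ has $\F_q$-dimension of order $d(N+1)H$ for large $N, H$. Imposing the linear conditions $\nu(F^n P) = 0$ for $0 \leq n < M$ costs only $O\!\left(\sum_{n<M} (h_K(F^n P) + 1)\right)$ $\F_q$-dimensions once we use the assumption on $\alpha_\lambda(P)$ to keep the target heights small. Balancing the parameters so that $M$ is a large multiple of $NH$, a Siegel-type dimension count yields a non-zero $\nu_{N,H}$ with $\nu_{N,H}(F^n P) = 0$ for $0 \leq n < M$. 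A Thue--Siegel extrapolation then upgrades this to vanishing on the whole orbit: if $\nu_{N,H}(F^n P)$ were non-zero for some further $n$, the product formula would force its $h_K$ to be $\geq 0$, but the direct upper bound of order $p^{N + n \deg_\tau M} h_K(P) + N \cdot h_K(F)$ combined with the subexponential decay hypothesis pushes it below that floor.

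The vanishing set $Z := \{\nu_{N,H} = 0\}$ is thus a proper algebraic subgroup of $\mathbb{G}_{a/K}^d$ containing $\{F^n P\}_{n \geq 0}$. Replacing $Z$ by the identity component of $\bigcap_{n \geq 0} F^{-n}(Z)$, and absorbing the finite-orbit component into a preperiodic translate $Q$, yields a torsion translate of a proper $F$-stable connected subgroup containing $P$, contradicting the hypothesis.

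The main obstacle is the extrapolation. The hypothesis $\alpha_\lambda(P) \leq 1$ controls heights only in the $\lambda$-direction, whereas the auxiliary $\nu_{N,H}$ manufactured by Siegel's lemma lies a priori in an arbitrary direction of $K\{\tau\}^d$; one must either propagate the height bound from $\lambda$ to a sufficiently rich family of additive projections (using the geometric hypothesis on $P$ to rule out degenerate cancellations), or steer the Siegel construction into the $\F_p[\tau]$-span of $L$ and its $M^n$-translates. A second delicate point is that in characteristic $p$ the Frobenius $\tau$ amplifies heights multiplicatively, so the estimates must be tuned so that the auxiliary $\tau$-degree $N$ is not swamped by the iterate $\tau$-degree; this is feasible precisely because $F$ is additive, keeping the dependence of $h_K(F^n P)$ on $n$ governed by a single linear recurrence in $M_d(K\{\tau\})$.
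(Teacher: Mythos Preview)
Your proposal has the right architecture (auxiliary construction, extrapolation, algebraic conclusion) but is missing the two ideas that make the argument go through.

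First, and you flag this yourself, your auxiliary $\nu_{N,H}$ is a generic additive form, so the hypothesis $\alpha_\lambda(P)\leq 1$ is useless for bounding $h_K(\nu_{N,H}(F^nP))$: that hypothesis controls only the single sequence $\lambda(F^nP)$. You suggest steering the construction into the $\F_p[\tau]$-span of the $LM^n$, but that cyclic submodule carries no more parameters than the sequence $\lambda(F^nP)$ itself, leaving nothing for Siegel's lemma to work with. The paper's resolution is to build a \emph{non-additive} auxiliary polynomial
\[
G_N(\mathbf{x}) \;=\; \sum_{u,\ell} c_{u,\ell}\, u(\mathbf{x})\, \Pi_N(\mathbf{x})^\ell, \qquad \Pi_N := \lambda\circ F^N,
\]
with $u$ ranging over all monomials of degree $<\delta_1^N$ and $0\leq \ell<\delta_4^{dN}$. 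The monomial factors $u$ supply the $\asymp \delta_1^{dN}$ free parameters for Siegel's lemma, while the factor $\Pi_N^\ell$ carries essentially all of the degree. Additivity of $F$ and $\lambda$ enters only to guarantee that $\Pi_N(b(F)P)$ is an $\F_p$-linear combination of the values $\lambda(F^{n}P)$ for $N\leq n\leq N+\deg b$, so its height is governed by $\alpha_\lambda(P)$ alone.

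Second, your extrapolation is not a mechanism. That ``the product formula would force its $h_K$ to be $\geq 0$'' is vacuous, since heights are always nonnegative, and your stated upper bound $p^{N+n\deg_\tau M}h_K(P)+\cdots$ grows with $n$, so nothing is pushed ``below that floor.'' What is actually needed is a \emph{lower} bound on one local term that beats the global height upper bound. The paper obtains it by (i) a preliminary reduction replacing $P$ by $(F^{s_2}-F^{s_1})P$ so that $P$, and hence every $Q=b(F)P$, has all coordinates in $\mathfrak{m}_{v_0}$ for a fixed finite place $v_0$; and (ii) using Siegel's lemma to make $G_N$ vanish to order $T\asymp(\delta_2\delta_4)^N$ at $\mathbf{x}=\mathbf{0}$. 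Then $G_N(Q)\in\mathfrak{m}_{v_0}^{\,T}$, hence $\mathrm{ord}_{v_0}G_N(Q)\geq T$, and this local valuation exceeds the height upper bound, forcing $G_N(Q)=0$ for all $b$ with $\deg b<cN$.

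Finally, your endgame (intersecting the $F^{-n}(Z)$ and peeling off a preperiodic translate) is not how the paper concludes. Once $G_N$ vanishes on $\Gamma(cN,P)=\{b(F)P:\deg b<cN\}$, the paper invokes Yu's $t$-module zero estimate: from $\#\Gamma(cN,P)\gg(\deg G_N)^d$ it produces directly a proper $F$-stable connected subgroup $H\lneq\mathbb{G}_{a/K}^d$ and a nonzero $b\in\F_p[t]$ with $b(F)P\in H$, i.e.\ $P$ lies in a torsion translate of $H$.
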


\medskip

The proof of Theorem~\ref{main} follows a principle of diophantine approximation, and rests on Yu's $t$-module zero estimate~\cite{yu}.
Taking for $\lambda$ the coordinate projections we record the obvious corollary.

\begin{corol}  \label{reccor}
Assume as above $F$ is an additive endomorphism of $\mathbb{G}_{a/K}^d$, and consider $P \in \mathbb{G}_a^d(K)$.
 If $\delta > 1$ but $\alpha(P) = 1$, then the orbit $(F^nP)_{n \in \N}$ is not Zariski-dense.
\end{corol}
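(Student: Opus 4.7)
The plan is to derive the corollary from Theorem~\ref{main} by specializing $\lambda$ to the coordinate projections $\pi_j : \mathbb{G}_{a/K}^d \to \mathbb{G}_{a/K}$, $j = 1, \ldots, d$, which are non-zero additive functions. I argue the contrapositive in the form: assuming the orbit $(F^n P)_{n \in \N}$ is Zariski-dense and $\delta > 1$, I will deduce $\alpha(P) > 1$.

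First I verify that the Zariski-density hypothesis puts $P$ in the setting where Theorem~\ref{main} applies. If $P = Q + R$ with $Q$ preperiodic and $R$ lying in a proper connected $F$-stable algebraic subgroup $H \subsetneq \mathbb{G}_{a/K}^d$, then $F^n P = F^n Q + F^n R \in \{Q_1,\ldots,Q_k\} + H$, a proper Zariski-closed subset of $\mathbb{G}_{a/K}^d$; this would contradict the Zariski-density of the orbit. Hence $P$ satisfies the hypothesis of Theorem~\ref{main}.

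Next I locate a coordinate with restricted dynamic degree exceeding $1$. For a polynomial self-map of $\mathbb{A}^d$ the degree coincides with the maximum coordinate degree: $\deg F^n = \max_j \deg(\pi_j \circ F^n)$. If every $\delta_{\pi_j}$ were $\leq 1$, then for each $\varepsilon > 0$ and each $j$ one would have $\deg(\pi_j \circ F^n) \leq (1+\varepsilon)^n$ for all sufficiently large $n$, forcing $\delta \leq 1 + \varepsilon$ and, since $\varepsilon$ is arbitrary, $\delta \leq 1$. Therefore $\delta > 1$ yields some index $j_0$ with $\delta_{\pi_{j_0}} > 1$.

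Finally, Theorem~\ref{main} applied with $\lambda := \pi_{j_0}$ gives $\alpha_{\pi_{j_0}}(P) > 1$. The definition of $h_K$ via $\min_j \ord_v^-$ immediately yields $h_K(F^n P) \geq h_K(\pi_{j_0}(F^n P))$ at every $n$, hence $\alpha(P) \geq \alpha_{\pi_{j_0}}(P) > 1$, completing the contrapositive. There is no substantial obstacle: the only step beyond direct substitution is the elementary reduction of the hypothesis $\delta > 1$ to some $\delta_{\pi_j} > 1$, which is why the author labels the corollary as \emph{obvious} once Theorem~\ref{main} is in hand.
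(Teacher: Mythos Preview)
Your proof is correct and matches the paper's approach exactly: the paper simply says ``taking for $\lambda$ the coordinate projections we record the obvious corollary,'' and your argument is a careful unpacking of that one-line remark, supplying the routine verifications (Zariski-density rules out the exceptional locus of Theorem~\ref{main}; $\delta>1$ forces some $\delta_{\pi_{j_0}}>1$; and $\alpha(P)\geq\alpha_{\pi_{j_0}}(P)$ from the height definition).
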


In section~\ref{cmpl} below we prove a strengthening of Corollary~\ref{reccor}, including a proof of the equality $\alpha(P) = \delta$ for additive group endomorphisms of small dynamic degree $\delta$, under an unproved algebraic hypothesis on $F$ that is likely to be satisfied for all  $t$-modules.

 \subsection{Further problems}
 An analog for linear tori of Conjecture~\ref{addconj} (ii, iii) follows from W.M. Schmidt's Subspace theorem: If $\lambda \in \bar{\Q}(\mathbb{G}_m^r)$ is a non-constant rational function and $\boldsymbol{\alpha} \in \mathbb{G}_m^r(\bar{\Q})$ has $h(\lambda(\boldsymbol{\alpha}^n)) = o(n)$, then $\boldsymbol{\alpha}$ belongs to a torsion translate of a proper linear subtorus. More precisely, $h(\lambda(\boldsymbol{\alpha}^n)) = o(n)$ holds if and only if there is a homomorphism $p : \mathbb{G}_m^r \to \mathbb{G}_m^s$ and a $\mu \in \bar{\Q}(\mathbb{G}_m^s)$ such that $p(\boldsymbol{\alpha})$ is torsion and $\lambda = p^* \mu$.

We close the introduction by raising as a conjecture  the analogous statement for abelian varieties. We do it in two versions, the first being the direct translation of the $\mathbb{G}_m^r$ statement just given:

\begin{conj} \label{elliptic}
Let $A$ be an abelian variety and  $\lambda : A \dashrightarrow \proj^1$ a rational function, both defined over $\bar{\Q}$. Consider $P \in A(\bar{\Q})$, all of whose positive integer multiples $[n]P$ belong to the domain of definition of $\lambda$. Then $h(\lambda([n]P)) = o(n^2)$ as $n \to +\infty$ if and only if there exists a homomorphism $p : A \to B$ to an abelian variety $B$ and a rational function $\mu : B \dashrightarrow \proj^1$ such that $\lambda = p^*\mu$ and $p(P) \in B$ is a torsion point.
\end{conj}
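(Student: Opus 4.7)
\medskip

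\noindent \textbf{Proof plan for Conjecture~\ref{elliptic}.}
The ``if'' direction is immediate: if $\lambda = p^*\mu$ with $p : A \to B$ a homomorphism and $p(P)$ torsion, then $\lambda([n]P) = \mu([n]p(P))$ takes only finitely many values, so $h(\lambda([n]P)) = O(1)$.

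For the converse, set $L := \lambda^* \mathcal{O}_{\proj^1}(1) \in \mathrm{Pic}(A)$, i.e.\ the polar line bundle class of $\lambda$. The height-machine identity gives $h(\lambda(Q)) = \hat{h}_L(Q) + O(1)$ on the domain of $\lambda$, where $\hat{h}_L$ is the N\'eron--Tate canonical height of $A$ relative to $L$. Decomposing $L = L^+ + L^-$ in $\mathrm{Pic}(A) \otimes \Q$ into its symmetric and antisymmetric (equivalently, Picard-$0$) parts, the standard scaling $[n]^*L^+ = (L^+)^{\otimes n^2}$ and $[n]^*L^- = (L^-)^{\otimes n}$ yields
$$
\hat{h}_L([n]P) = n^2\, \hat{h}_{L^+}(P) + n\, \hat{h}_{L^-}(P).
$$
Since $L$ is effective and abelian-variety effective divisors are nef, so is $L^+$; thus $\hat{h}_{L^+}$ is a positive semidefinite quadratic form. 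The hypothesis $h(\lambda([n]P)) = o(n^2)$ therefore forces $\hat{h}_{L^+}(P) = 0$.

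Let $B := K(L^+)^0 = K(L)^0$, the identity component of the kernel of $\phi_{L^+}$, which equals $K(L)^0$ since the antisymmetric part $L^- \in \mathrm{Pic}^0(A)$ makes $\phi_{L^-}$ vanish; this is a connected abelian subvariety of $A$. The decisive claim is that $\lambda$ is $B$-invariant. Indeed, $B$ acts by translation on the complete linear system $|L|$, giving an algebraic morphism $B \to \mathrm{Aut}(|L|) = \mathrm{PGL}(H^0(A,L)^{\vee})$ from a complete connected group to an affine one; such a morphism is trivial. Consequently every divisor in $|L|$ is $B$-invariant \emph{as a divisor}; applying this to the zero and polar divisors of $\lambda$ shows that $\lambda \circ t_b$ and $\lambda$ have equal divisor, so $\lambda \circ t_b = c_b \, \lambda$ for a scalar $c_b \in \bar{\Q}^*$. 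Running the same completeness-vs-affineness rigidity for $b \mapsto c_b \in \mathbb{G}_m$ forces $c_b \equiv 1$, so $\lambda \circ t_b = \lambda$ for every $b \in B$.

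Hence $\lambda$ factors as $\bar\lambda \circ p$ for the quotient $p : A \to A/B$ and a rational function $\bar\lambda : A/B \dashrightarrow \proj^1$; correspondingly $L^+ = p^*\bar L^+$. The descended symmetric nef bundle $\bar L^+$ has finite kernel $K(\bar L^+) = K(L^+)/B$, so it is ample on $A/B$, and the identity $\hat{h}_{\bar L^+}(p(P)) = \hat{h}_{L^+}(P) = 0$ forces $p(P)$ to be torsion, as required. The main technical delicacy I expect lies in making the height-machine identity $h(\lambda(Q)) = \hat{h}_L(Q) + O(1)$ precise in the presence of the indeterminacy locus of $\lambda$, standardly handled by a blow-up resolution and Néron local heights; the rest of the argument is algebro-geometric, hinging on the clean rigidity that morphisms from a complete connected group to an affine one are trivial, which is what collapses every Picard-level obstruction to the descent of $\lambda$ through $A \to A/K(L^+)^0$.
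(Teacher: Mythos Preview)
This statement is presented in the paper as an open \emph{conjecture}; the paper offers no proof of it, so there is nothing to compare your attempt against on that front.

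Your argument has a genuine gap at the very first analytic step, and the gap is not a technicality. The identity $h(\lambda(Q)) = \widehat{h}_L(Q) + O(1)$ that you invoke holds when $\lambda$ is a \emph{morphism}; for a genuinely rational $\lambda$ only the one-sided bound $h(\lambda(Q)) \leq h_L(Q) + O(1)$ survives (resolving the indeterminacy gives $\tilde\lambda^*\mathcal{O}_{\proj^1}(1) = \pi^*L - E$ with $E$ effective exceptional), and that inequality points the wrong way for your deduction $\widehat{h}_{L^+}(P)=0$. A concrete witness: take $A=E\times E$, $\lambda(Q_1,Q_2)=x(Q_1)-x(Q_2)$, and $P=(Q,Q)$ with $Q\in E(\bar\Q)$ non-torsion. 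Then $\lambda([n]P)=0$ for every $n\geq 1$, so $h(\lambda([n]P))\equiv 0$, while the polar class is $L = 2(\{O\}\times E)+2(E\times\{O\})$ and $\widehat{h}_{L^+}(P)=4\,\widehat{h}(Q)>0$. Thus the hypothesis $h(\lambda([n]P))=o(n^2)$ does \emph{not} force $\widehat{h}_{L^+}(P)=0$, and your chain of implications never begins. In fact this same example shows more: $\lambda$ here is not invariant under translation by any positive-dimensional subgroup of $E\times E$, so it does not factor through any proper quotient, and hence the literal conclusion you are trying to reach fails as well. This is why the statement is posed as a conjecture rather than proved, and why your acknowledged ``technical delicacy'' cannot be resolved by a routine blow-up.

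The purely algebraic part of your write-up --- that $\lambda$ always descends through $A \to A/K(L)^0$ via the rigidity of morphisms from complete connected groups to affine ones, and that the descended symmetric class is ample there --- is correct and tidy, but it is a statement about $\lambda$ alone and uses nothing about $P$ or the growth hypothesis; it does not bridge the gap above.
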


A consequence of this would be that unless $P$ lies in a torsion translate of a proper abelian subvariety of $A$, all coordinate functions $X_i/X_0$ in a fixed projective embedding of $A$ have a comparable arithmetic complexity along the the sequence of positive integer multiples $[n]P$.

The second version of our conjecture concerns a $\Z^d$-action, and is much stronger. Taking into account the finite generation of the Mordell-Weil group, we give it in a form that makes no reference to the action at all.

\begin{conj} \label{faltmov}
Let $A$ be an abelian variety and $\lambda : A \dashrightarrow \proj^1$ a rational function, both defined over a number field $K$. Let $h$ be the standard height on $\proj^1$ and $\widehat{h}$ the canonical height associated with a fixed symmetric projective embedding of $A$. Then,
\begin{equation}  \label{infim}
\inf_P \frac{1+h(\lambda(P))}{\widehat{h}(P)} > 0
\end{equation}
as $P$ ranges over the $K$-rational points $P \in A(K) \cap \mathrm{dom}(\lambda)$ not lying in a translate $Q + B$ of a positive-dimensional abelian subvariety of $A$ that is contracted by $\lambda$, i.e., $\lambda(Q+B) = \lambda(Q)$.

In particular, if $A$ is simple and $\lambda$ non-constant, then~(\ref{infim}) holds over all $K$-rational points $P \in A(K) \cap \mathrm{dom}(\lambda)$ in the domain of definition of $\lambda$.
\end{conj}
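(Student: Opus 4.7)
The plan is to recast Conjecture~\ref{faltmov} as a Vojta-type proximity inequality on $A$ and then to invoke Faltings's diophantine approximation theorem on subvarieties of abelian varieties. The essential case is that of $A$ simple and $\lambda$ non-constant, to which the general statement reduces via Poincar\'e reducibility (with care required to track how contracted translates arise from an isogeny decomposition). For $A$ simple and $\lambda$ non-constant there are no positive-dimensional abelian subvarieties at all, so the goal collapses to the uniform bound $h(\lambda(P)) \geq c\,\widehat{h}(P) - C$ on $A(K) \cap \mathrm{dom}(\lambda)$.

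Next I would write $\lambda = f/g$ as a ratio of two global sections $f, g \in H^0(A, L)$ of a symmetric ample line bundle $L$ (after replacing $L$ with a sufficiently positive multiple if needed), and complete $\{f, g\}$ to a basis $(s_i)$ of $H^0(A, L)$. Using the product formula,
$$
h(\lambda(P)) = \sum_v \log \max(|f(P)|_v, |g(P)|_v), \qquad h_L(P) = \sum_v \log \max_i |s_i(P)|_v \asymp \widehat{h}(P) + O(1).
$$
The nonnegative difference $h_L(P) - h(\lambda(P))$ is an arithmetic proximity function for $P$ to the base locus $B = \{f = g = 0\} \subset A$. Since $\lambda$ is non-constant, $B$ is a proper closed subset of $A$; and since $A$ is simple, $B$ contains no positive-dimensional abelian subvariety. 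The conjecture is therefore equivalent to a Vojta proximity bound
$$
\mathrm{prox}_B(P) \leq (1 - \epsilon)\,\widehat{h}(P) + C_\epsilon, \qquad P \in A(K) \setminus B,
$$
for a single $\epsilon \in (0, 1)$, uniformly in $P$.

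To establish such a bound I would appeal to Faltings's diophantine approximation inequality on abelian varieties, which gives, for each fixed $\epsilon > 0$, a proper Zariski closed $Z_\epsilon \subsetneq A$ outside of which the proximity bound holds. Faltings's Mordell-Lang theorem then forces $Z_\epsilon(K)$ to lie in a finite union of $K$-translates of abelian subvarieties of $A$, and the simplicity of $A$ combined with the properness of $Z_\epsilon$ reduces each such component to a single point. Hence $Z_\epsilon(K)$ is finite, and can be absorbed into the additive constant.

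The principal obstacle is the required Vojta-Faltings proximity inequality itself: for an arbitrary higher-dimensional subvariety $B$ of $A$, it is essentially Vojta's arithmetic conjecture on abelian varieties, which remains open beyond the cases of points and of curves handled by Faltings's original work. Real progress on Conjecture~\ref{faltmov}, even in the simple case, therefore appears to be intertwined with substantial progress on Vojta's conjecture in this setting.
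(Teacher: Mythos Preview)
The statement you are attempting to prove is Conjecture~\ref{faltmov} in the paper, and the paper offers no proof of it: it is explicitly raised as an open problem (``We close the introduction by raising as a conjecture\ldots''), described as a refinement of Faltings's big theorem and as a ``Mordell--Lang conjecture with moving targets.'' There is therefore no proof in the paper to compare your proposal against.

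As for the proposal itself, you are candid that it is not a proof. Your reformulation of the simple case as a proximity bound $m_B(P) \leq (1-\epsilon)\,\widehat{h}(P)$ for the indeterminacy locus $B = \{f=g=0\}$ is reasonable and does capture the content of the conjecture, and it is in the same spirit as the paper's remark linking the conjecture to Faltings's theorem and to Vojta's moving-targets philosophy. But, as you yourself note, the required proximity inequality for a higher-codimensional $B \subset A$ is essentially an instance of Vojta's conjecture on abelian varieties and is not a consequence of Faltings's 1991 results; invoking ``Faltings's diophantine approximation inequality'' at that step is wishful. I would also flag your reduction of the general case to the simple case via Poincar\'e reducibility: a rational function $\lambda$ on $A$ need not factor through, or decompose along, an isogeny $A \sim \prod A_i$, so ``with care required'' understates a genuine obstruction rather than a bookkeeping issue. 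In short, your write-up is a fair heuristic discussion of why the conjecture is hard and what it is equivalent to, but it does not go beyond what the paper already signals by labeling the statement a conjecture.
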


 It is easily seen that this conjecture refines the statement of Faltings's ``big theorem''~\cite{faltings}. If we consider the level sets $\lambda(P) = a$ as a varying hypersurface,  Conjecture~\ref{faltmov} can be seen as a ``Mordell-Lang conjecture with moving targets,'' in analogy with Vojta's Roth theorem with moving targets~\cite{rothmov}.

\section{An estimate for the coefficients of $F^{\circ n}$}

Continuing the notation and assumptions from~\ref{notass}, we let $F$ and $\lambda$ respectively a non-zero additive polynomial self-map and a non-constant rational function on $\mathbb{G}_{a/K}^d$.

\begin{lemma} \label{truncbound}
Let $1 \leq s < s_+$ be real numbers. For $n \gg_{s,s_+} 0$, all monomials in $F^{\circ n}$ with degree at most $s^n$ have height at most $s_+^n$.
\end{lemma}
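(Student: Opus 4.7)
Plan. In the skew polynomial ring $M_d(K)\{\tau\}$ with commutation rule $\tau \cdot a = a^p \cdot \tau$, composition of additive polynomials corresponds to multiplication, so $F^{\circ n}$ is the $n$-th power $F^n$ of $F = A_0 + A_1\tau + \cdots + A_r\tau^r$. Distributing the $n$-fold product, the matrix coefficient of $\tau^k$ in $F^n$ (attached to the degree-$p^k$ monomials of $F^{\circ n}$) is
$$
C_k^{(n)} \;=\; \sum_{\substack{(j_1,\ldots,j_n) \in \{0,\ldots,r\}^n \\ j_1+\cdots+j_n = k}} A_{j_1} \, A_{j_2}^{(p^{j_1})} \, A_{j_3}^{(p^{j_1+j_2})} \cdots A_{j_n}^{(p^{j_1+\cdots+j_{n-1}})},
$$
where $A^{(p^t)}$ denotes the entry-wise Frobenius twist of $A$.

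The heart of the argument is a place-by-place ultrametric estimate. For a closed point $v \in C$, set $M_v := \max_{j,a,b} \bigl(-\ord_v^-(A_{j,ab})\bigr)$, and extend $-\ord_v^-$ to matrices by $-\ord_v^-(X) := \max_{a,b} (-\ord_v^-(X_{ab}))$. This quantity is subadditive under matrix multiplication, scales by $p^t$ under the Frobenius twist, and is ultrametric under addition. Consequently, each individual summand $P(\vec j)$ in the display above satisfies
$$
-\ord_v^-(P(\vec j)) \;\leq\; M_v \sum_{i=1}^n p^{j_1+\cdots+j_{i-1}} \;\leq\; n \, p^k \, M_v,
$$
the final bound using that each partial sum $j_1+\cdots+j_{i-1}$ is at most $k$. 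Applying the ultrametric inequality to $C_k^{(n)} = \sum_{\vec j} P(\vec j)$ yields $-\ord_v^-(C_k^{(n)}) \leq n p^k M_v$, and summing over $v$ weighted by $\log|k(v)|$ gives the clean global bound
$$
h_K\bigl(C_k^{(n)}\bigr) \;\leq\; n \, p^k \, h_K(F), \qquad h_K(F) := \sum_v M_v \log|k(v)|.
$$

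A monomial of $F^{\circ n}$ of degree $\leq s^n$ is an entry of some $C_k^{(n)}$ with $p^k \leq s^n$, so has height at most $h_K(C_k^{(n)}) \leq n s^n h_K(F)$. Since $s_+ > s$, we have $n \, h_K(F) \leq (s_+/s)^n$ for $n \gg_{s,s_+} 0$, giving the required bound of $s_+^n$. The essential subtlety is that one must avoid any iterative composition-level bound: a naive recursion of the shape $h_K(F \circ G) \leq h_K(F) + p^{\deg_\tau F} h_K(G)$, truncated at $\tau$-degree $\leq n \log_p s$, would pick up a factor of order $s^n$ per step and would produce only a useless super-exponential bound $\sim s^{n^2}$. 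The direct expansion of $F^n$ combined with the ultrametric inequality applied place-by-place across the exponentially many summands, before integrating over $v$ to recover the global height, is what produces the sharp estimate — and is precisely where the function-field (characteristic $p$) setting is used essentially.
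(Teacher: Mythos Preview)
Your proof is correct and follows essentially the same approach as the paper: both establish the key bound $h_K(\text{coeff of }\tau^k\text{ in }F^{\circ n}) \leq Cnp^k$ via a place-by-place ultrametric estimate, from which the lemma follows immediately. The only cosmetic difference is that the paper packages the argument as a one-step induction on $n$ using the recurrence $A_{n+1,i} = \sum_{j} A_j A_{n,i-j}^{p^j}$, whereas you unroll this recursion into the full $n$-fold skew product and bound each summand directly; your write-up also makes the necessity of working place-by-place (before summing to the global height) more explicit than the paper's terse presentation.
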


\begin{proof}
Considering the expression of $F$ in the form~(\ref{additive}), it is enough to prove that the coefficient matrix of $\mathbf{x}^{p^i}$ in $F^{\circ n}$ has entries of height bounded by $Cnp^i$, where $C$ is an upper bound for the heights of the entries of the matrices $A_0,\ldots,A_r$. We do this by induction on $n$, the base being clear. For the induction step, write
$$
F^{\circ n} = A_{n,0} + A_{n,1} \tau + \cdots A_{n,rn} \tau^{rn} \quad \textrm{ with } h_K(A_{n,i}) \leq Cnp^i,
$$
and note that the matrix coefficient of $\mathbf{x}^{p^i}$ in $F^{\circ (n+1)} = F \circ F^{\circ n}$ is
\begin{equation} \label{recrs}
\sum_{0 \leq j \leq i} A_j A_{n,i-j}^{p^j},
\end{equation}
where the undefined $A$'s are understood to be zero.
\end{proof}

As an immediate consequence, applying the lemma with $s := \delta_{\lambda} + \epsilon$ and letting $\epsilon \to 0$, we conclude the upper bound in Conjecture~\ref{addconj} (iii).

\begin{corol}  \label{restupper}
For every $P \in \mathbb{G}_{a/K}^d(K)$ it holds $\alpha_{\lambda}(P) \leq \delta_{\lambda}$. \proofend
\end{corol}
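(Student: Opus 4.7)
The plan is to bound the height of the polynomial $\lambda \circ F^{\circ n}$ via Lemma~\ref{truncbound}, then apply a non-archimedean evaluation estimate to conclude. Fix $\epsilon > 0$ and set $s := \delta_\lambda + \epsilon$, $s_+ := \delta_\lambda + 2\epsilon$, so that $\deg(\lambda \circ F^{\circ n}) \leq s^n$ for $n \gg_\epsilon 0$ by the very definition of $\delta_\lambda$.

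The key observation is combinatorial. Expanding $\lambda(F^{\circ n}(x)) = \sum_I c_I \prod_j F^{\circ n}_j(x)^{I_j}$ as a polynomial in $x$, the coefficient of any monomial $x^L$ is a finite sum of products of the $c_I$'s with coefficients of $F^{\circ n}$ at monomials $x^M$ whose exponents sum to $L$; in particular each such $|M| \leq |L|$. Consequently, for $|L| \leq s^n$, only coefficients of $F^{\circ n}$ at monomials of degree $\leq s^n$ intervene, and by Lemma~\ref{truncbound} these all have height $\leq s_+^n$ once $n$ is large. Since every place of $K$ is non-archimedean, a direct local-height majoration then gives $h_K(\lambda \circ F^{\circ n}) \leq c_1 s_+^n$ for a constant $c_1$ depending only on $\lambda$ and $d$.

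Applying the standard non-archimedean evaluation bound $h_K(G(P)) \leq h_K(G) + \deg(G) \cdot h_K(P)$ to $G := \lambda \circ F^{\circ n}$ yields
$$
h_K\big(\lambda(F^{\circ n}P)\big) \leq c_1 s_+^n + s^n \, h_K(P) \leq c_2 s_+^n,
$$
so $\alpha_\lambda(P) \leq s_+$; letting $\epsilon \downarrow 0$ concludes $\alpha_\lambda(P) \leq \delta_\lambda$. In the general rational case $\lambda = \mu/\nu$, the same argument is applied to $\mu$ and $\nu$ separately and combined via $h_K(\mu/\nu) \leq h_K(\mu) + h_K(\nu)$.

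The sole conceptual point is the combinatorial observation that the coefficient of $x^L$ in the composition depends only on coefficients of $F^{\circ n}$ at monomials of degree $\leq |L|$: this is what renders the ``truncated'' bound of Lemma~\ref{truncbound} sufficient and delivers the sharp exponent $\delta_\lambda$ in place of the trivial $\delta$. I anticipate no genuine technical difficulty.
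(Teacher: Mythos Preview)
Your proposal is correct and follows essentially the same route as the paper's one-line proof: apply Lemma~\ref{truncbound} with $s = \delta_\lambda + \epsilon$ to bound the heights of the low-degree coefficients of $F^{\circ n}$, feed this into the ultrametric evaluation estimate, and let $\epsilon \downarrow 0$. Your combinatorial observation---that the coefficient of $\mathbf{x}^L$ in $\lambda \circ F^{\circ n}$ involves only coefficients of $F^{\circ n}$ at monomials of degree $\leq |L|$---is exactly what makes the truncated bound applicable, and is left implicit in the paper.
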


\section{The auxiliary construction}  \label{auxiliary}

 We assume that $\delta_{\lambda} > 1$ and $\alpha_{\lambda}(P) = 1$,
  and set out to prove that $P$ lies in a torsion translate of a proper sub $t$-module of $(\mathbb{G}_{a/K}^d,F)$.
   To this end, we introduce parameters $\delta_1, \ldots, \delta_4$, and $\delta_+$ with
 \begin{equation} \label{pamchoix}
 1 < \delta_4 < \delta_3 < \delta_2 < \delta_1 < \delta_{\lambda} < \delta_+
 \end{equation}
 and
 \begin{equation}  \label{concav}
 \delta_2^{d+1} < \delta_1^d\delta_3, \quad \delta_1 < \delta_2\delta_4.
 \end{equation}
 In what follows we let $\Pi_N := (F^N)^*\lambda$ and consider a sequence of $N$ going to infinity such that $\deg{\Pi_N} > \delta_1^N$. All asymptotics in $N$ will be with respect to a fixed choice of $\delta$ parameters.

 Let $\mathcal{U}_N$ be the set of monomials in $\mathbf{x} := (x_1,\ldots,x_d)$ (the coordinate functions of $\mathbb{G}_a^d$) of degree $< \delta_1^N$. We have
 \begin{equation}  \label{monomfreed}
 |\mathcal{U}_N| \sim \delta_1^{dN}/d! \gg_d \delta_1^{dN}.
 \end{equation}

The idea in this paper is to construct, and exploit, a non-zero polynomial $G = G_{N} \in K[\mathbf{x}]$,
\begin{equation}  \label{auxifun}
  G_N(\mathbf{x}) = \sum_{\substack{u \in \mathcal{U}_N \\ \ell < \delta_4^{dN}}} c_{u,\ell} \cdot u \Pi_N^{\ell},
\end{equation}
with  $c_{u,\ell} \in K$ of small height, vanishing to  order $(\delta_2\delta_4)^N$ at $\mathbf{x} = \mathbf{0}$.

We fix a closed point $\infty \in C$ and declare it to be the ``place at infinity.''

\begin{lemma} \label{siegel}
For $N \gg 0$, there exist $c_{u,\ell} \in A := \Gamma(C - \infty,\Oz_C)$, not all zero, such that
\begin{equation} \label{estaux}
h_K(c_{u,\ell}) < (\delta_3\delta_4)^N \quad \textrm{ for all } u \in \mathcal{U}_N, \, \ell < \delta_4^{dN}
\end{equation}
and the function $G_N$ defined by~(\ref{auxifun}) vanishes to order at least $(\delta_2\delta_4)^n$ at $\mathbf{x} = \mathbf{0}$.
\end{lemma}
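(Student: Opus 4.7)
The plan is a Siegel's-lemma auxiliary-polynomial construction over the ring $A = \Gamma(C-\infty, \Oz_C)$. Writing the condition that $G_N$ vanish to order $R_N := \lceil(\delta_2\delta_4)^N\rceil$ at $\mathbf{x}=\mathbf{0}$ as the vanishing of every coefficient $[\mathbf{x}^\beta]G_N$ with $|\beta|<R_N$, one gets the homogeneous $A$-linear system
\[
\sum_{u\in\mathcal{U}_N,\;\ell<\delta_4^{dN}} c_{u,\ell}\,[\mathbf{x}^\beta]\bigl(u\,\Pi_N^\ell\bigr)=0,\qquad |\beta|<R_N,
\]
in the $N_{\mathrm{unk}}=|\mathcal{U}_N|\lfloor\delta_4^{dN}\rfloor\gg_d (\delta_1\delta_4)^{dN}$ unknowns, subject to $M_{\mathrm{eq}}\sim(\delta_2\delta_4)^{dN}/d!$ equations (using~(\ref{monomfreed}) and the elementary count of monomials $\mathbf{x}^\beta$ with $|\beta|<R_N$). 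Since $\delta_1>\delta_2$ by~(\ref{pamchoix}), the ratio $M_{\mathrm{eq}}/(N_{\mathrm{unk}}-M_{\mathrm{eq}})$ is bounded by $(\delta_2/\delta_1)^{dN}$, decaying exponentially in $N$.

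The key input is a uniform upper bound on the heights of the matrix entries $[\mathbf{x}^\beta](u\Pi_N^\ell)$, and its proof must exploit the fact that only \emph{low-degree} monomials ($|\beta|<R_N$) enter the system. For $\lambda$ additive (as in Theorem~\ref{main}; the case of a general rational $\lambda$ reduces to this after clearing a denominator of controlled height), $\Pi_N=\lambda\circ F^N$ is additive of the shape $\sum_{i,j} b_{N,i,j}\,x_j^{p^i}$, and the recursion~(\ref{recrs}) in the proof of Lemma~\ref{truncbound} gives $h_K(b_{N,i,j})\leq CNp^i$ for a constant $C$ depending only on $F$ and $\lambda$. A single monomial contribution to $[\mathbf{x}^\beta](u\Pi_N^\ell)$ is a product of $\ell$ coefficients $b_{N,i_k,j_k}$ with $\sum_k p^{i_k}\leq|\beta|<R_N$, whose $K$-height is therefore at most $CN\sum_k p^{i_k}\leq CNR_N$. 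The number of such decompositions is crudely at most $(dN)^{R_N}$, contributing only an $O(R_N\log N)$ correction after summation, so we end up with the uniform bound $h_K\bigl([\mathbf{x}^\beta](u\Pi_N^\ell)\bigr)\leq H_N = O\bigl(N(\delta_2\delta_4)^N\bigr)$.

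We conclude by applying the function-field Siegel--Thue lemma for $A$-integral solutions of homogeneous linear systems (Bombieri--Vaaler-type, with an $O(g(C))$ correction involving the genus of $C$): there is a non-trivial $(c_{u,\ell})\in A^{N_{\mathrm{unk}}}\setminus\{0\}$ of maximum $K$-height at most
\[
\frac{M_{\mathrm{eq}}}{N_{\mathrm{unk}}-M_{\mathrm{eq}}}\,H_N + O(\log N_{\mathrm{unk}}) \;=\; O\!\bigl(N\cdot(\delta_2^{d+1}\delta_4/\delta_1^d)^N\bigr).
\]
The first inequality in~(\ref{concav}) asserts $\delta_2^{d+1}/\delta_1^d<\delta_3$, so $\delta_2^{d+1}\delta_4/\delta_1^d<\delta_3\delta_4$ strictly; the resulting height bound is therefore $o\bigl((\delta_3\delta_4)^N\bigr)$, yielding~(\ref{estaux}) for all $N\gg 0$.

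The principal obstacle is the height estimate in the second paragraph: a naive bound $h_K(\Pi_N^\ell)\leq \ell\cdot h_K(\Pi_N)$ (valid for arbitrary polynomials) would cost an extra factor on the order of $\delta_4^{dN}/R_N$ and force a strictly stronger inequality than~(\ref{concav}). The saving point is that the vanishing constraints only see monomial coefficients of total degree $<R_N$; for these, the degree-filtered height control of Lemma~\ref{truncbound} delivers the sharp $H_N=O(NR_N)$, after which~(\ref{pamchoix}) and~(\ref{concav}) align perfectly with what Siegel's lemma provides.
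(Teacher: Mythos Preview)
Your proof is correct and follows essentially the same route as the paper: set up the homogeneous linear system for the vanishing of the low-degree coefficients of $G_N$, count $L\asymp_d(\delta_1\delta_4)^{dN}$ unknowns against $M\ll(\delta_2\delta_4)^{dN}$ equations, invoke Lemma~\ref{truncbound} to bound the height of the system by $(\delta_2\delta_4+o(1))^N$, and apply the function-field Siegel lemma together with the first inequality in~(\ref{concav}). Your explicit discussion of why the degree-truncated height bound (rather than the naive $\ell\cdot h_K(\Pi_N)$) is needed is exactly the content hidden in the paper's one-line citation of Lemma~\ref{truncbound}; the combinatorial factor $(dN)^{R_N}$ you include is in fact unnecessary here since all places of $K$ are non-archimedean, but it does no harm.
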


\begin{proof}
Immediate from  Siegel's lemma for function fields (cf. Thunder~\cite{thunder}), using Lemma~\ref{truncbound} and the first inequality in our constraint (\ref{concav}). The number of free parameters in the linear system is $L \asymp_d (\delta_1\delta_4)^{dN}$, the number $M$ of linear equations is fewer than $(\delta_2\delta_4)^{dN}$, the Dirichlet exponent $M/(L-M)$ is $\asymp_d \big( \delta_2/\delta_1 \big)^{dN}$, and the height of the linear system is $\ll (\delta_2\delta_4 + o(1))^{N}$.
\end{proof}

\section{The extrapolation} \label{extrapolate}

\subsection{A reduction} \label{reduxio} We fix a closed point $v_0$ of $C$ with $v_0 \neq \infty$ such that $P \in \mathbb{G}_a^d(\mathcal{O}_{C,v_0})$ and $F$ extends as an endomorphism of the additive group $\mathbb{G}_a^d$ over $\mathcal{O}_{C,v_0}$. In other words, denoting by $v_0$ also the corresponding place as well as normalized valuation of $K$, we require the $v_0$-integrality of $P$ and of all matrix coefficients of the $A_i$ in~(\ref{additive}).

Then $F^sP \in \mathbb{G}_a^d(\mathcal{O}_{C,v_0})$ for all $s \in \N$, and since the residue field $k(v_0)$ is finite, it follows by the pigeonhole principle that there are $s_1 < s_2$ such that all components of $(F^{s_2}-F^{s_1})P$ belong to the maximal ideal $\mathfrak{m}_{v_0}$ of $\mathcal{O}_{C,v_0}$.

It is clear that $Q := (F^{s_2}-F^{s_1})P$ is $F$-preperiodic if and only $P$ is. Moreover, due to the additivity of $F$, it certainly holds $\alpha(Q) \leq \alpha(P)$. Hence, in proving Theorem~\ref{main}, we may replace $P$ with $Q$, and may thus make the following assumption:
\begin{equation} \label{redux}
\textrm{All components of $P$ belong to $\mathfrak{m}_{v_0}$. }
\end{equation}

\subsection{Additivity and upper bound}
 Our next task consists of showing that, for $N \gg 0$, the assumption $\alpha_{\lambda}(P) = 1$ and the constraints~(\ref{pamchoix}) and~(\ref{concav}) imply that $G_N$ vanishes at the new points $Q := b(F)(P)$, for a certain range $\deg{b} < cN$ of polynomials $b \in \F_p[t]$. It is at this step that the additivity constraint on $F$ and $\lambda$ is used crucially. Under our assumption $\alpha_{\lambda}(P) = 1$, it implies
 \begin{equation}  \label{expl}
  h_K(\Pi_N(Q)) < \delta_4^{N+\deg{b}} < \delta_4^{(1+c)N}, \quad \quad N \gg 0.
 \end{equation}
 By Lemma~\ref{siegel} we obtain the upper bound
 \begin{equation}  \label{uppbnd}
 h_K(G_N(Q)) \ll_{F,\lambda,P} (\delta_3\delta_4)^N + \delta_1^N + \delta_4^{(d+1+c)N}.
 \end{equation}

 \subsection{The lower bound} By construction, denoting $I$ the maximal ideal $(x_1,\ldots,x_d)$ in the ring $\Oz_{C,v_0}[\mathbf{x}]$, the polynomial $G_N(\mathbf{x}) \in K[\mathbf{x}]$ actually lies in
 $$
 G_N(\mathbf{x}) \in  I^{T} \cdot \Oz_{C,v_0}[\mathbf{x}], \quad \quad T := \lfloor (\delta_2\delta_4)^N \rfloor
 $$
Since on the other hand our reduction~(\ref{redux}) implies for any $b \in \F_p[t]$ that $Q = b(F)(P) \in \mathbb{G}_a^d(\mathfrak{m}_{v_0})$ has components in $\mathfrak{m}_{v_0}$, we conclude that
\begin{equation} \label{lowerbnd}
\mathrm{ord}_{v_0} G_N(Q) ) \geq T > (\delta_2\delta_4)^N - 1.
\end{equation}

\subsection{Comparison} We introduce the further restriction
\begin{equation}  \label{epsa}
\delta_2 > \delta_4^{d+1+c}
\end{equation}
 on our parameters.
Comparing the bounds~(\ref{uppbnd}) and~(\ref{lowerbnd}) we obtain for $N \gg 0$ the vanishing $G_N(b(F)P) = 0$ in the range $b \in \F_p[t]$, $\deg{b} < cN$. This completes the extrapolation.

\section{The zero estimate and conclusion  \\of the proof of Theorem~\ref{main}}  \label{zeroestimate}

\subsection{Yu's theorem}  We state, in the language of additive polynomial mappings (cf.~section~\ref{t-mod} above for the translation to $t$-modules), a particular case of Yu's zero multiplicity estimate. For the proof of Theorem~\ref{main} we only need it in the relatively easier case with no multiplicities ($U = 0$).

For $S \in \N$, write
$$
\Gamma(S,P) = \{ b(F)(P) \mid b \in \F_p[t], \, \deg{b} \leq S \} \subset K
$$
 for the $\F_p$-linear span of the points $F^mP$, $m = 0, \ldots,S$.

\begin{lemma}[Yu~\cite{yu}]  \label{yulem}
Consider (as before) $F : \mathbb{G}_{a/K}^d \to \mathbb{G}_{a/K}^d$ an additive group endomorphism. There is a constant $C = C(F) < \infty$ such that the following is true. Suppose $G \in K[\mathbb{G}_{a/K}^d] = K[\mathbf{x}]$ is a non-zero polynomial of total degree at most $D$ vanishing to order at least $dU + 1$ at each point in $\Gamma(R,P)$. Then, there exists a proper connected algebraic subgroup $H \lneq \mathbb{G}_{a/K}^d$, stable under $F$, such that
\begin{equation}  \label{zme}
\binom{U + \mathrm{codim}{H}}{\mathrm{codim}{H}} \cdot \deg{H} \cdot \# ( \Gamma(R-d+1,P) / H )  \leq C D^{\mathrm{codim}{H}}.
\end{equation}
\end{lemma}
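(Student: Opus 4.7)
The plan is to prove Lemma~\ref{yulem} by adapting the Philippon--Masser--Wustholz zero estimate for commutative algebraic groups to the setting where the subgroups produced in the descent must be stable under the $\F_p[t]$-action defined by $F$. I will focus the sketch on the multiplicity-free case $U = 0$, which is what is needed for Theorem~\ref{main}; the general case follows from the standard thickening trick that is the source of the binomial factor $\binom{U+\mathrm{codim}\,H}{\mathrm{codim}\,H}$ in~(\ref{zme}).

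First I would pass to the projective closure $\proj^d$ and replace $G$ by its homogenization $\widetilde{G}$, so that all intersection degrees are tracked by Bezout's theorem on projective space. The construction is a descending induction producing a chain $\mathbb{G}_{a/K}^d = H^{(0)} \supsetneq H^{(1)} \supsetneq \cdots$ of $F$-stable connected algebraic subgroups. At stage $k$, I choose coset representatives $p_1,\ldots,p_m \in \Gamma(R-k+1,P)$ for the classes of $\Gamma(R-k+1,P)$ modulo $H^{(k)}$, and form the intersection scheme
$$
\mathcal{Z}_k \;:=\; \bigcap_{i=1}^{m} \bigl(Z(\widetilde{G}) - p_i\bigr) \;\subset\; \proj^d.
$$
If this intersection is proper, i.e.\ has the expected codimension, then Bezout yields directly a bound of the form $\#(\Gamma(R-k+1,P)/H^{(k)}) \cdot \deg{H^{(k)}} \leq C D^{\mathrm{codim}\,H^{(k)}}$, which is~(\ref{zme}) with $H = H^{(k)}$. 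Otherwise $\mathcal{Z}_k$ contains an unexpected positive-dimensional component, and I take $H^{(k+1)}$ to be the identity component of the translation stabilizer in $\mathbb{G}_a^d$ of a chosen such component.

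The main obstacle, and the place where this is genuinely a statement about $t$-modules rather than about general commutative group varieties, is to verify that each $H^{(k+1)}$ produced is automatically $F$-stable. This is enforced by the additivity of $F$ together with the fact that $\Gamma(R,P)$ is closed under $F$ at the cost of shifting $R \mapsto R+1$: applying $F$ to the defining data of $\mathcal{Z}_k$ produces an analogous intersection scheme based on $F$-shifted coset representatives in $\Gamma(R-k+2,P)/H^{(k)}$, whence one deduces the $F$-invariance of the stabilizer of $\mathcal{Z}_k$. Iterating costs one shift per descent step, and accounts exactly for the passage $R \mapsto R-d+1$ appearing in the conclusion after the at most $d$ rounds forced by $\mathrm{codim}\,H^{(k)} \leq d$. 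A secondary technical point is to control $\deg{\mathcal{Z}_k}$ when the intersection has embedded or nonreduced components supported along $H^{(k)}$; I would handle this by passing to the quotient $t$-module $\mathbb{G}_{a/K}^d / H^{(k)}$---which is itself a commutative additive group scheme in characteristic $p$, and on which Bezout applies cleanly---and re-running the argument there. Carrying out both points carefully is the substance of Yu's paper~\cite{yu}, and I would expect the write-up to essentially reproduce that argument.
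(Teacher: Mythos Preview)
Your proposal and the paper ultimately do the same thing: defer to Yu's zero estimate~\cite{yu}. The paper, however, does not sketch the Philippon-style descent at all; its entire proof is the observation that the statement is Theorem~2.2 of~\cite{yu} specialized to $l=1$ and $\Phi = \mathbb{G}_{a/K}^d$ the tautological analytic submodule, together with the remark that Yu's standing hypothesis~1(ii) (the Anderson condition that all eigenvalues of $D\phi$ coincide) plays no role in Yu's proof of Theorem~2.2 and may therefore be dropped. That last remark is the only content the paper supplies, and it is essential here since the lemma is stated for an arbitrary additive endomorphism, not for an Anderson $t$-module in the strict sense; your write-up does not address this point and should.

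As for your sketch itself: the overall shape (Philippon descent, translation stabilizers as obstruction subgroups, the loss of one unit in $R$ per descent step accounting for $R \mapsto R-d+1$) is correct, and your closing sentence rightly identifies the hard parts as the substance of~\cite{yu}. A couple of the intermediate formulations are loose, though. The scheme $\mathcal{Z}_k$ is not formed by intersecting all translates at once; one adds translates one at a time, and the dichotomy proper/improper is tested at each step. More importantly, your argument for $F$-stability of the stabilizer (``applying $F$ to the defining data of $\mathcal{Z}_k$ produces an analogous intersection scheme\ldots'') does not actually show that the translation stabilizer of a component is $F$-invariant; in Yu's proof the $F$-stability is obtained by a more careful choice of the obstruction variety at each step, and this is precisely where the argument departs from Philippon's. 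Since you are in any case citing~\cite{yu} for the details, these imprecisions are harmless, but if you intend to keep the sketch you should flag them as heuristic.
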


\begin{proof}
This is Theorem~2.2 of Yu~\cite{yu} with $l = 1$ in {\it loc. cit.} and $\Phi = \mathbb{G}_{a/K}^d$ the tautological analytic submodule. Yu works in the context of Anderson $t$-modules involving the additional constraint 1~(ii) in {\it loc. cit.}, which however is not used in the proof of Theorem~2.2.
\end{proof}

\subsection{Conclusion}

Noting that $\# \Gamma(cN - d + 1,P) = p^{\lfloor cN \rfloor - d + 1}$ and that $\deg{G_N} < (\delta_+\delta_4^d)^N < (\delta_+\delta_{\lambda}^d)$ for $N \gg 0$, we now introduce the final condition
\begin{equation}  \label{finas}
p^{c} > (\delta_+ \delta_{\lambda}^d \big)^{d}
\end{equation}
on our parameters. The conditions (\ref{pamchoix}), (\ref{concav}), (\ref{epsa}), and (\ref{finas})  are compatible. We first choose $\delta_+ > \delta_{\lambda}$ arbitrarily. Next we choose $c$ large enough for~(\ref{finas}) to hold. Then, by taking $\delta_4 > 1$ sufficiently close to $1$, we ensure that~(\ref{epsa}) holds with a $\delta_2 < \delta_{\lambda}$. Finally, we choose $\delta_1 \in (\delta_2,\delta_{\lambda})$ and $\delta_3 \in (\delta_4,\delta_2)$ such that both inequalities in~(\ref{concav}) are satisfied.

 Applying Lemma~\ref{yulem} with $D := (\delta_+\delta_{\lambda})^N$ and $U := 0$ we conclude that there is a non-zero $b \in \F_p[t] \setminus \{0\}$ and a proper connected algebraic subgroup $H \lneq \mathbb{G}_{a/K}^d$ stable under $F$ and containing $b(F)(P)$. This means that the $F$-orbit of $P$ lies in a torsion translate of a proper sub $t$-module. \proofend

\section{Complements}  \label{cmpl}

In this section we explain how Corollary~\ref{reccor} can be improved under an additional purely algebraic hypothesis that is likely to be satisfied for all $t$-modules.

\subsection{Saturation degree} \label{algcap} This is a notion arising naturally in our context, concerning the possibility of using more than one coordinate function in the auxiliary construction. We define it more generally for an arbitrary morphism (polynomial self-map) $F : \A_K^d \to \A_K^d$ over any field $K$. Let $m_1,\ldots,m_d : \A_K^d \to \A_K^1$ be the coordinate projections.

\begin{defin}
The \emph{saturation degree} of $F$ is the supremum $\kappa(F)$ of the positive real values $\kappa_-$ for which there exists a constant $L < \infty$ such that, for all $N \gg_{L,\kappa_-} 0$, the $K$-linear span of all products of the regular functions
\begin{eqnarray*}
\Psi_{i,n,j} :=  \big( (F^n)^* m_i \big)^j
\end{eqnarray*}
over subsets of $V := \big\{ 1 \leq i \leq d, \, 0 \leq n < N, \, 0 \leq j < L \big\}$, has dimension exceeding $\kappa_-^{dN}$.
\end{defin}

Clearly, $\delta^{1/d} \leq \kappa \leq \delta$. The following basic example shows that the right inequality can be strict and the left inequality can be an equality. If $F = (f_1(x_1), \ldots, f_d(x_d))$ with $f_j(x) \in K[x]$ univariate polynomials of degrees $q_j := \deg{f_j} > 0$, then $\kappa = (q_1 \cdots q_d)^{1/d}$, whereas $\delta = \max_{j=1}^d q_j$.

Nevertheless it seems that $\kappa = \delta$  holds outside of degenerate situations. To make a specific statement relevant to the goals of this paper we return to the case of additive mappings.

\subsection{The Saturation Conjecture}  \label{satco}

The following conjecture, if true, would determine the $\kappa$ invariant for all $t$-modules.

\begin{conj} \label{capconj}
(i) Suppose as before that $F \in \mathrm{End}_{\F_p}(\mathbb{G}_{a/K}^d)$ an additive polynomial mapping. If no non-zero proper connected algebraic subgroup of $\mathbb{G}_{a/K}^d$ is stable under $F$, then the saturation degree of $F$ equals its dynamic degree: $\kappa = \delta$. (ii) $\kappa^d$ is multiplicative over short exact sequences of $t$-modules: given $0 \to H \to G \to G/H \to 0$, it holds $\kappa(G)^{\dim{G}} = \kappa(H)^{\dim{H}} \kappa(G/H)^{\dim{G/H}}$.
\end{conj}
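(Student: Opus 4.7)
The plan is to attack Conjecture~\ref{capconj} by first establishing the multiplicativity part (ii), then using it to focus on the simple case (i) via the structure theory of additive polynomials. For (ii), given a short exact sequence $0 \to H \to G \to G/H \to 0$, I would choose coordinates $\mathbf{x} = (\mathbf{y}, \mathbf{z})$ adapted to the filtration, with $\mathbf{y}$ spanning $H$ and $\mathbf{z}$ projecting to a basis of $G/H$. Then $F$ takes the block lower triangular form $F(\mathbf{y}, \mathbf{z}) = (F_H(\mathbf{y}) + B(\mathbf{y}, \mathbf{z}), F_{G/H}(\mathbf{z}))$, and iteration preserves this shape, with the $\mathbf{z}$-block recovering the iterates of $F_{G/H}$. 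The lower bound $\kappa(G)^{a+b} \geq \kappa(H)^a \kappa(G/H)^b$ (with $a = \dim H$, $b = \dim G/H$) should then follow by taking the tensor product of maximal spanning families of products for $H$ and $G/H$, and using the $(\mathbf{y}, \mathbf{z})$-bigrading to verify linear independence of leading terms. The matching upper bound requires a more delicate counting: filter the ambient span by pure $\mathbf{z}$-degree and bound the associated graded pieces by products of $H$- and $G/H$-spans, controlling the cross term $B$ which does not contribute extra independence.

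For (i), assume no proper connected subgroup of $\mathbb{G}_{a/K}^d$ is $F$-stable, and aim for $\kappa = \delta$. I would adopt a leading-symbol strategy: writing $F = A_0 + A_1 \tau + \cdots + A_r \tau^r$, the top $\tau$-part of $(F^n)^* m_i$ is governed by $A_r^{n}$ acting on the $i$-th standard basis vector, with corrections of lower $\tau$-order computed via the recursion~(\ref{recrs}). For a suitable choice of $L$ growing with $\epsilon^{-1}$, the products $\prod \Psi_{i,n,j}$ should realise enough distinct leading exponents in the $\tau$-graded coordinate ring to account for $(\delta - \epsilon)^{dN}$ linearly independent functions. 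The simplicity hypothesis would enter by preventing degenerate collisions among these leading symbols: if too many products collapsed to the same top-degree term, the resulting linear relations should exhibit a non-trivial $F$-stable subspace of $K^d$, contradicting the hypothesis.

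The main obstacle, and where the depth of the conjecture resides, is in making that last step rigorous: extracting an $F$-invariant subgroup from a failure of saturation. Unlike Yu's zero estimate (Lemma~\ref{yulem}), where a polynomial vanishing on an orbit produces a subgroup by essentially geometric means, here the input is algebraic dependence among the defining iterates themselves, with no evaluation point in sight. A plausible route is to reinterpret the shortfall $\kappa < \delta$ as a non-density statement for the $K$-algebra generated by $F$ acting on the coordinate ring regarded as a module over the non-commutative twisted polynomial ring $K\{\tau\}$, and then to apply a Jacobson-type density argument to recover a proper invariant subspace descending to an $F$-stable algebraic subgroup. Carrying out this translation, especially in the non-semisimple case where $F$ admits Jordan-like blocks, and in coordination with the purity phenomena mentioned at the end of Section~\ref{t-mod}, appears to be the principal technical hurdle.
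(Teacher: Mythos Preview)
The statement you are addressing is labelled \emph{Conjecture}~\ref{capconj} in the paper, and the paper does not prove it. The only thing the paper offers toward the conjecture is the remark immediately following it: when the top matrix $A_r$ in the presentation~(\ref{additive}) is invertible (equivalently, $F$ extends to an endomorphism of $\proj_K^d$), then $\kappa = \delta$ holds. There is no proof of either part~(i) or part~(ii) in the text; the conjecture is used only as a hypothesis in Theorem~\ref{suppl}. So there is nothing to compare your proposal against.

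As for the proposal itself, it is a reasonable research outline but not a proof, and you are candid about this. A small correction on part~(ii): in your block form, additivity together with $F$-stability of $H$ forces the cross term to depend on $\mathbf{z}$ alone, i.e.\ $F(\mathbf{y},\mathbf{z}) = (F_H(\mathbf{y}) + B(\mathbf{z}),\, F_{G/H}(\mathbf{z}))$, not $B(\mathbf{y},\mathbf{z})$; this actually helps your filtration argument. The genuine gap is exactly where you locate it: for part~(i), the step from ``the span of products $\prod \Psi_{i,n,j}$ is too small'' to ``there exists a proper $F$-stable connected subgroup'' is the whole content of the conjecture, and neither the leading-symbol heuristic nor the Jacobson-density analogy you sketch has been made to produce such a subgroup. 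Yu's zero estimate does not apply, as you note, because there is no point at which anything vanishes. Until that step is supplied, the proposal remains a plan rather than a proof, which is consistent with the paper's own stance that this is an open problem.
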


\medskip

{\it Example. } If the matrix coefficient $A_r$ in the presentation~(\ref{additive}) is invertible, meaning equivalently that $F$ extends as a morphism $\proj_K^d \to \proj_K^d$, then the condition $\kappa = \delta$  is satisfied. \proofend

\subsection{A conditional result}

We can now state our complement to Theorem~\ref{main}.

\begin{thm} \label{suppl}
Assume the Saturation Conjecture~\ref{capconj}. Let
$$
F \in \mathrm{Mor}_{\F_p}(\mathbb{G}_{a/K}^d,\mathbb{G}_{a/K}^d)
$$
 be an additive polynomial mapping for which $\kappa = \delta$, for example, any simple $t$-module. Consider a $K$-point $P \in \mathbb{G}_{a}^d(K)$. Then, if $P$ is not preperiodic under $F$, it holds
\begin{equation}  \label{lwrbndimp}
\delta \geq \alpha(P) \geq \min(\delta, p^{1/d}).
\end{equation}
\end{thm}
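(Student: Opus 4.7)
The proof is by induction on $d$ and follows the diophantine-approximation template of Theorem~\ref{main}, with the single coordinate $\lambda$ in the auxiliary construction replaced by the richer collection of products $\Theta_S := \prod_{(i,n,j) \in S} \Psi_{i,n,j}$ supplied by Saturation Conjecture~(i). The base case $d=1$ reduces to the main Siegel–Yu scheme below, because the only proper connected $F$-stable subgroup of $\mathbb{G}_a$ is $\{0\}$, so Yu's output then forces $P$ to be preperiodic.

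Assume $P$ is not preperiodic and, for contradiction, $\alpha(P) < \alpha_0 < \min(\delta, p^{1/d})$. After the reduction of Section~\ref{reduxio}, all components of $P$ lie in $\mathfrak{m}_{v_0}$. Fix parameters $\alpha_0 < \alpha_1 < \delta_2 < \delta_- < \delta$ and $c > d\log_p \delta$ with $\alpha_1^{1+c} < \delta_2$. By Saturation Conjecture~(i) applied with $\kappa = \delta$, for some $L$ and all $N \gg 0$ the $K$-span of the $\Theta_S$'s has dimension exceeding $\delta_-^{dN}$; Siegel's lemma over function fields (cf.~\cite{thunder}), paralleling Lemma~\ref{siegel}, then yields a non-zero $G_N = \sum_S c_S \Theta_S$ with controlled $h_K(c_S)$ vanishing to order $T \asymp \delta_2^N$ at $\mathbf{x} = \mathbf{0}$. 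The extrapolation of Section~\ref{extrapolate} now applies: for $b \in \mathbb{F}_p[t]$ with $\deg b \leq cN$, the additivity identity $F^n b(F) P = b(F) F^n P$ gives $h_K(m_i(F^n b(F)P)) \leq \alpha_1^{n+cN}$, and the product structure of $\Theta_S$ passes this through to
\[
h_K(G_N(b(F)P)) \,\ll\, h_K(c_S) + dL^2\,\alpha_1^{(1+c)N},
\]
which by the parameter choice falls strictly below the $v_0$-adic lower bound $T$, forcing $G_N(b(F)P) = 0$ for every such $b$. With $\deg G_N \ll \delta^N$ and $p^{cN} > (\deg G_N)^d$, Yu's zero estimate (Lemma~\ref{yulem} with $U=0$) furnishes $b \in \mathbb{F}_p[t] \setminus \{0\}$ and a proper connected $F$-stable $H \lneq \mathbb{G}_a^d$ with $b(F)P \in H$.

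If $H = \{0\}$, which is forced in the base case $d=1$, then $b(F)P = 0$, contradicting the non-preperiodicity of $P$. Otherwise, Saturation Conjecture~(ii) combined with the general bound $\kappa \leq \delta$ applied to both $H$ and $\mathbb{G}_a^d/H$ forces $\kappa(F|_H) = \delta(F|_H) = \delta$, so the inductive hypothesis applies to $(H, F|_H)$ and the non-preperiodic point $b(F)P \in H(K)$ to give $\alpha(b(F)P) \geq \min(\delta, p^{1/\dim H}) \geq \min(\delta, p^{1/(d-1)}) > \min(\delta, p^{1/d})$. Since $\alpha(b(F)P) \leq \alpha(P)$ by the argument of Section~\ref{reduxio}, this contradicts the original assumption on $\alpha_0$. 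The main quantitative obstacle is arranging the parameters in the second paragraph so that the Yu constraint $p^c > \delta^d$ and the extrapolation constraint $\alpha_1^{1+c} < \delta_2$ remain simultaneously satisfiable as $\alpha_1$ is pushed up to $\min(\delta, p^{1/d})$.
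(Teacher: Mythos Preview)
Your overall architecture---Siegel's lemma on the saturation products $\Theta_S$, extrapolation along $\Gamma(cN,P)$, Yu's zero estimate, then descent to $H$ via Saturation Conjecture~(ii)---matches the paper's. The gap is precisely the one you flag in your last sentence, and it is fatal for the $U=0$ scheme: the two constraints $p^c > \delta^d$ (Yu with $U=0$) and $\alpha_1^{1+c} < \delta_2 < \delta$ (extrapolation) are \emph{not} simultaneously satisfiable all the way up to $\alpha_1 = \min(\delta,p^{1/d})$. Writing $x=\log\delta$, $y=\log p$, elimination of $c$ shows the compatible range is only
\[
\log\alpha_1 \;<\; \frac{xy}{y+dx},
\]
which is strictly below both $x$ and $y/d$, hence strictly below $\log\min(\delta,p^{1/d})$. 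For instance at $\delta=p^{1/d}$ your scheme only yields $\alpha(P)\geq p^{1/(2d)}$. The induction on $\dim H$ does not rescue this: it only fires once you have produced an $H$, and producing $H$ already requires $\alpha_1$ to lie in the restricted range above.

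The missing idea, which the paper supplies, is to extrapolate \emph{with multiplicities}: using hyperderivatives one shows $G_N$ vanishes to order $\asymp\delta_4^N$ (not merely to order~$1$) at each $b(F)P$, and then applies Yu's Lemma with $U\asymp\delta_4^N$. The multiplicity factor $\binom{U+\mathrm{codim}\,H}{\mathrm{codim}\,H}\sim U^{\mathrm{codim}\,H}$ in~(\ref{zme}) relaxes the Yu constraint from $p^c>\delta^d$ to $p^c>(\delta_+/\delta_4)^d$. With $\delta_4>\alpha$ this pair of constraints becomes $(\log\delta-\log\alpha)(\log p - d\log\alpha)>0$, which holds exactly when $\alpha<\min(\delta,p^{1/d})$. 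So the hyperderivative step is what buys the sharp exponent; without it you prove a weaker theorem.
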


{\it Remark. } In fact, the method of proof yields somewhat more. Either $P$ is preperiodic, or else for any finite field extension $\F_{q_0}/\F_p$ it holds $\max_{\zeta \in \F_{q_0}^{\times}} \alpha(\zeta P) \geq \min(\delta, q_0^{1/d})$. In particular, if $F$ is $\F_{q_0}$-linear, the inequality~(\ref{lwrbndimp}) improves naturally to $\alpha(P) \geq \min(\delta,q_0^{1/d})$.

\subsection{Small dynamic degree}  Thus, under our Saturation Conjecture, we obtain Silverman's conjectured equality of arithmetic and dynamic degrees for simple $t$-modules of dynamic degree not exceeding $p^{1/d}$. The example of the $d$-th tensor power $C^{\otimes d}$ of the Carlitz module demonstrates that it is possible to have $\delta = p^{1/d}$. We do not know if $\delta \leq p^{1/d}$ can be strictly satisfied (with $1 < \delta < p^{1/d}$); to the contrary, it seems possible that the rational number $\log{\delta} / \log{p}$ in Conjecture~\ref{addconj} (i) always has denominator not exceeding $d$.

\medskip

The proof of Theorem~\ref{suppl} is a variant of that of Theorem~\ref{main}, using more freedom in the auxiliary construction and also multiplicities in the extrapolation. We indicate the changes in the argument.

\subsection{Proof: Auxiliary construction} We assume
\begin{equation} \label{contpo}
\alpha < \min(\delta,p^{1/d})
\end{equation}
and consider similarly constrained $\delta$ parameters as before:
\begin{equation}  \label{deltprim}
\alpha < \delta_4 < \delta_3 < \delta_2 < \delta_1 < \delta < \delta_+, \quad \delta_2^{d+1} < \delta_1^d\delta_3.
\end{equation}
By assumption, there is a subset $S \subset 2^V$ such that, for $N \gg_{\delta_1} 0$,
\begin{equation} \label{fred}
|S| > \delta_1^{Nd} \textrm{ and $\prod_{(i,n,j) \in s}\Psi_{i,n,j}, \quad s \in S$, \, are linearly independent.}
\end{equation}
Our auxiliary construction will now take the form
\begin{equation}  \label{newux}
G_N(\mathbf{x}) = \sum_{s \in S} c_s \prod_{(i,n,j) \in s} \Psi_{i,n,j}, \quad c_s \in K.
\end{equation}
Siegel's lemma ensures $c_s \in \Gamma(C - \infty, \Oz_C)$, not all zero, with
\begin{equation}  \label{newsi}
h_K(c_s) < \delta_3^N,
\end{equation}
such that the function $G_N$ defined by~(\ref{newux}) vanishes to order at least $T  := \lfloor \delta_2^N \rfloor$ at $\mathbf{x} = \mathbf{0}$.

\subsection{Hyperderivatives}
 Our next task consists of showing that, for $N \gg 0$, our assumption~(\ref{contpo}) implies that $G_N$ vanishes to order $\delta_4^N$ at the new points $Q := b(F)(P)$, for a certain range $\deg{b} < cN$ of polynomials $b \in \F_p[t]$.

 A polynomial $G \in K[\mathbf{x}]$ vanishes to order $U$ at $\mathbf{x} = Q$ if and only if  the expansion of $G(\mathbf{z} + Q)$ contains no monomials of degree $\leq U$. Introducing the \emph{hyperderivative}
 \begin{equation}
 \Delta_{\mathbf{i}}(G)(Q) := [\mathbf{z}^{\mathbf{i}}] \, Q(\mathbf{z} + Q),
 \end{equation}
 this means having $\Delta_{\mathbf{i}}(G)(Q) = 0$ for all $\mathbf{i} = (i_1,\ldots,i_d)$ with $|\mathbf{i}| := \sum_{t=1}^d i_t \leq U$. Note that if $G$ vanishes at $\mathbf{x} = \mathbf{0}$ to order $U \geq |\mathbf{i}|$, then $\Delta_{\mathbf{i}}(G)$ does to order $U - |\mathbf{i}|$.

 The equality $\Delta_{\mathbf{i}}(G_N)(Q) = 0$ in our extrapolation will be obtained from the product formula in $K$. We start by making the reduction~\ref{reduxio}. Supposing $\Delta_{\mathbf{i}}(G_N)(Q) \neq 0$, the product formula yields (cf.~\ref{notat})
 \begin{eqnarray} \label{prdn}
 \label{pform} \mathrm{ord}_{v_0} (\Delta_{\mathbf{i}}(G_N)(Q) )  \log{|k(v_0)|} \\ \label{pform2} = -\sum_{v \neq v_0} \mathrm{ord}_{v}(\Delta_{\mathbf{i}}(G_N)(Q) )  \log{|k(v)|}
  \leq h_K(\Delta_{\mathbf{i}}(G_N)(Q)).
 \end{eqnarray}

Our aim now is to derive a contradiction by estimating the left-hand side~(\ref{pform}) from below, using~(\ref{redux}) and the high order of vanishing at $\mathbf{x} = \mathbf{0}$,  and the right-hand side from above, using Lemma~\ref{siegel} and the additivity of $F$.

\subsection{The lower bound} By construction, denoting $I$ the maximal ideal $(x_1,\ldots,x_d)$ in the ring $\Oz_{C,v_0}[\mathbf{x}]$, the polynomial $\Delta_{\mathbf{i}}(G_N)(\mathbf{x}) \in K[\mathbf{x}]$ actually lies in
$$
\Delta_{\mathbf{i}}(G_N)(\mathbf{x}) \in I^{T - |\mathbf{i}|} \cdot \Oz_{C,v_0}[\mathbf{x}].
$$
Since on the other hand our reduction~\ref{reduxio} implies for any $b \in \mathbb{F}_p[t]$ that $Q := b(F)(P)$ has components in $\mathfrak{m}_{v_0}$, we conclude that
\begin{equation} \label{lwrb}
\mathrm{ord}_{v_0} (\Delta_{\mathbf{i}}(G_N)(Q)) \geq T - |\mathbf{i}|.
\end{equation}

\subsection{The upper bound} Here the additivity of $F$ and the boundedness by $L = L(\delta_1)$ of the exponents $j$ in the considered $\Psi_{i,n,j}$ are crucial. We have
\begin{eqnarray*}
\Psi_{i,n,j}(\mathbf{z}+Q) = m_i(F^n(\mathbf{z}+Q))^j = (m_i(F^n(\mathbf{z}))+m_i(F^n(Q)))^j,
\end{eqnarray*}
and the heights of the $K[\mathbf{z}]$-coefficients in this expansion are estimated by Lemma~\ref{truncbound} using $h_K(F^n(Q)) = h_K(F^nb(F)P) = O(\delta_4^{n+\deg{b}})$.
From~(\ref{newsi}), we thus estimate
\begin{equation} \label{uprb}
h_K(\Delta_{\mathbf{i}}(G_N)(Q)) = O(\delta_3^N + \delta_4^{(1+c)N}), \quad |\mathbf{i}| \leq \delta_4^N, \quad \deg{b} < cN.
\end{equation}

\subsection{Comparison} Introducing now the constraint
\begin{equation}  \label{compr}
\delta_2 > \delta_4^{1+c},
\end{equation}
the comparison of (\ref{lwrb}) with (\ref{uprb}) negates~(\ref{prdn}), forcing in our range $\deg{b} < cN$ the vanishing of $G_N$ to order $\delta_4^N$ at all points $b(F)(P)$.

\subsection{Proof of Theorem~\ref{suppl}} We now apply Lemma~\ref{yulem} with $D := \delta_+^N$ and $U := \delta_4^N/(2d)$. Imposing our final condition
\begin{equation} \label{finc}
p^c > (\delta_+/\delta_4)^d
\end{equation}
on our parameters $\delta_1,\ldots,\delta_4,\delta_+$, and $c$, the conclusion is that $P$ lies in a torsion translate of a proper sub $t$-module $H$. Since in our case $\kappa = \delta$, part (ii) of Conjecture~\ref{capconj} and the general inequality $\kappa \leq \delta$ imply that the saturation and dynamic degrees of $H$ are still equal to $\kappa = \delta$. Continuing, $P$ is forced to be preperiodic.

It remains to note that the conditions~(\ref{deltprim}), (\ref{compr}) and (\ref{finc}) are compatible precisely when~(\ref{contpo}) holds. \proofend

\end{document}